\journal{Operations Research Letters}
\def\Z{\mathbb{Z}}
\def\R{\mathbb{R}}
\theoremstyle{definition}
\newtheorem{theorem}{Theorem}
\newtheorem{proposition}[theorem]{Proposition}
\newtheorem{definition}[theorem]{Definition}
\begin{document}

\begin{frontmatter}

\title{Continuous cutting plane algorithms in integer programming}

\author[poly]{Didier Ch\'etelat}
\author[cornell]{Andrea Lodi}

\affiliation[poly]{
            organization={CERC, Polytechnique Montréal},
            city={Montréal},
            country={Canada}}

\affiliation[cornell]{
            organization={Cornell Tech and Technion -- IIT},
            city={New York},
            country={USA}}

\begin{abstract}
Cutting planes for mixed-integer linear programs (MILPs) are typically computed in rounds by iteratively solving optimization problems, the so-called separation. Instead, we reframe the problem of finding good cutting planes as a continuous optimization problem over weights parametrizing families of valid inequalities. This problem can also be interpreted as optimizing a neural network to solve an optimization problem over subadditive functions, which we call the subadditive primal problem of the MILP. To do so, we propose a concrete two-step algorithm, and demonstrate empirical gains when optimizing generalized Gomory mixed-integer inequalities over various classes of MILPs. Code for reproducing the experiments can be found at \url{https://github.com/dchetelat/subadditive}.
\end{abstract}

\end{frontmatter}

\section{Introduction}

A mixed-integer linear programming (MILP) problem is an optimization problem of the form
\begin{align}
\begin{array}{rl}
    \underset{(x, z)\in\R^n}{\min} &c^tx + h^tz \\
    \text{s.t. } & Ax + Gz\geq b,\\
    &x, z\geq 0,\;\; x\in\Z^k,
\end{array}
\label{eq:milp}
\end{align}
where $A \in \R^{m \times k}$, $G \in \R^{m \times (n-k)}$, $c \in \R^k$, $h \in \R^{n-k}$ and $b \in \R^m$.
To solve such class of optimization problems, it is often useful to find lower (dual) bounds, i.e., values that optimistically approximate the MILP optimal value. Besides being useful to conservatively estimate how far from optimality a feasible solution might be, dual bounds are also useful in the branch-and-bound algorithm that forms the backbone of current state-of-the-art integer programming solvers \cite{lodi2010mixed}. The standard approach to obtain a dual bound is by solving the linear programming (LP) relaxation
\begin{align}
\begin{array}{rl}
    \underset{(x, z)\in\R^n}{\min} &c^tx + h^tz \\
    \text{s.t. } & Ax + Gz\geq b, \\
    & x, z\geq 0
\end{array}
\label{eq:lp}
\end{align}
to the original MILP \eqref{eq:milp}, i.e., the problem obtained by dropping the integrality requirement on the $x$ variables. For simplicity, we will assume both problems are feasible and bounded, so have optimal solutions.

A valid inequality for the MILP is any linear inequality $\alpha^tx +\gamma^tz \geq \beta$ that is satisfied by its feasible set, i.e., such that $\alpha^tx + \gamma^t z \geq \beta\;\; \forall (x, z)\in \{Ax + Gz\geq b, (x,z)\geq0, x\in\Z^k\}$. If furthermore there exists a point in the feasible set of the LP relaxation \eqref{eq:lp} that is not satisfied by this inequality, i.e., if there exists an $(\bar x, \bar z)\in\{Ax+Gz\geq b, (x,z)\geq 0\}$ such that $\alpha^t \bar x + \gamma^tz < \beta$, the inequality is called a cutting plane (or cut).

Algorithms exist that, given a MILP, produce cutting planes ``by construction". The resulting cutting planes are useful because the MILP with the cuts added is equivalent to the original MILP, but has a tighter linear relaxation, hence with equal or better dual bound. Usually, such algorithms proceed by solving a relaxation of the original MILP, most commonly the LP relaxation, and from its solution a collection of cutting planes, called a round, is produced. These cuts are added to the original MILP and the process is repeated again on this extended MILP, yielding a second round of cuts that can be added again to the MILP. This is repeated until convergence, or some other stopping criterion, to form a so-called cutting plane algorithm (see, e.g., \cite{lodi2010mixed}).

A major advantage of this approach is that monotone improvement is guaranteed, in that adding more cuts cannot worsen the dual bound. However, at the same time, as the number of cuts added grows, at every round the intermediate optimization problems grow in size, leading to increased computational cost, diminishing returns and reduced numerical stability.

In this paper, we propose an alternative approach to computing good cuts. Instead of devising an algorithm that produces good cuts by construction, we propose to consider a large parametrized family of inequalities that are valid by construction, but search among them for the best cuts. Our proposed algorithm alternates between solving the LP relaxation of the problem with the tentative cuts added, and taking gradient steps onto the inequality parameters so as to cut off the best solution found so far. Unlike a classical approach, the number of cuts remains fixed throughout the optimization, and the tentative cuts can be interpreted as progressively deformed or replaced in an attempt to distill an effective subset. Experiments show that this approach can find cuts leading to better dual bounds than the classical approach, although the computational cost of this search does not, for the moment, make it competitive compared with these classical methods. However, it offers a novel approach to computing cuts that completely circumvents issues regarding the increase in LP size, diminishing returns or numerical stability.

The paper is organized as follows. In Section \ref{sec:subadditive}, we introduce subadditivity as a way of generating valid inequalities for MILP \eqref{eq:milp} and we recall the interpretation of Gomory mixed-integer cuts as subadditive inequalities. In Section \ref{sec:generalized-inequalities}, we generalize the Gomory construction, while in Section \ref{sec:ccp-opt}, we formulate the optimization problem at the core of our separation algorithm. In Section \ref{sec:subadditive-opt}, we describe an interesting connection where both the classical GMI algorithm and our proposed approach can be interpreted as the training of a subadditive neural network, highlighting potential connections to machine learning. Our algorithm is presented in Section \ref{sec:algorithm} and its performance is computationally evaluated in Section \ref{sec:experiments}. Finally, we conclude in Section \ref{sec:conclusion}.

In the remainder of the paper, we will use the following notation. Inequality between two vectors $x, y\in\R^m$, denoted $x\leq y$, is taken to mean component-wise inequality, i.e., $x_i\leq y_i$ for all $1\leq i\leq m$. We also assume that functions apply to matrices column-wise: that is, for a function $f: \R^m\rightarrow\R^p$, and a matrix $X=[X_1,\dots,X_n]\in\R^{m\times n}$ with columns $X_1, \dots, X_n$, we write $f(X)$ for the $p\times n$ matrix $[f(X_1),\dots,f(X_n)]$ with columns $f(X_1), \dots, f(X_n)$. A diagonal matrix with diagonal vector $v$ will be denoted $\Delta(v)$. Let us denote $\overline{\R}=\R\cup\{-\infty,+\infty\}$ the extended real line, and $\{\cdot\}=x-\lfloor x\rfloor$ the fractional part function. The composition of two functions $f,g$ will be denoted $f\circ g$, and the concatenation of two vectors $x\in\R^{m}, x\in\R^{m'}$ will be denoted $[x, y]\in\R^{m+m'}$.

\section{Subadditive valid inequalities}
\label{sec:subadditive}

One way of generating valid inequalities is through non-decreasing, centered, subadditive functions. A function $\phi(y) : \R^m\rightarrow\R^p$ is subadditive if $\phi(y_1+y_2)\leq \phi(y_1)+\phi(y_2)$ for any $y_1, y_2$. We furthermore assume that $\phi$ is non-decreasing, that is $\phi(y_1)\leq\phi(y_2)$ for any $y_1\leq y_2$, and centered, that is $\phi(0)=0$. Let us denote $\mathcal{F}^{m,p}$ the set of all such functions.

Associated with any centered subadditive function $\phi$ is another function, its Upper Directional Derivative at Zero (UDDZ) $\bar{\phi} : \R^m\rightarrow \overline{\R}^p$, defined by
{
\setlength{\abovedisplayskip}{7pt}
\setlength{\belowdisplayskip}{7pt}
\begin{align*}
\bar{\phi}(y) = \underset{h\downarrow 0^+}{\lim\sup}\frac{\phi(hy)}{h}.
\end{align*}}%
This function is sublinear, i.e., it is subadditive and satisfies $\phi(\lambda y) = \lambda\phi(y)$ for any $y\in\R^m$ and $\lambda\geq0$. Furthermore, it upper bounds $\phi$, that is $\phi(y)\leq\bar{\phi}(y)$ for any $y \in \R^m$ \cite{guzelsoy2007duality}.

Subadditivity is closely connected to cutting planes in the following way. For a function $\phi\in\mathcal{F}^{m,p}$, the $p$ inequalities
{
\setlength{\abovedisplayskip}{7pt}
\setlength{\belowdisplayskip}{9pt}
\begin{align*}
\phi(A)x+\bar{\phi}(G)z \geq \phi(b)
\end{align*}}%
are always valid for the MILP \eqref{eq:milp} (see, e.g., \cite{guzelsoy2007duality}). That is, a subadditive function can be used as a cut-generating function.

One example is provided by Gomory mixed-integer (GMI) inequalities \cite{gom-60}. Let $B$ denote the $m\times m$ optimal basis matrix of the LP relaxation \eqref{eq:lp} in equality form,
% \begin{align*}
% \begin{array}{rl}
%     \underset{(x, z)\in\R^n}{\min} &c^tx + h^tz \\
%     \text{s.t. } & Ax + Gz - r = b, \\
%     & x, z, r\geq 0,
% \end{array}
% \end{align*}
that is, the matrix composed of the columns of $[A, G, -I]$ in the LP optimal basis. Then, the GMI inequalities are the $m$ cutting planes %\footnote{We will assume for simplicity that $\{B^{-1}b\}_i\neq0$ for every $1\leq i\leq p$ - if this is not the case, the induced inequality can be removed, or replaced by a dummy inequality $0\geq 0$ without loss of generality.} 
 induced by the function $\phi\in\mathcal{F}^{m,m}$ defined by
% {\footnotesize\begin{align*}
% \phi(y) &= 
% \min\left({\color{red}\Delta\bigg(\frac{1}{\{B^{-1}b\}}\bigg)}\{B^{-1}y\}, \;
% {\color{red}\Delta\bigg(\frac{1}{1-\{B^{-1}b\}}\bigg)}\Big[1\text{-}\{B^{-1}y\}\Big]\right) \\
% &+ \max\left(\frac{\text{-}B^{-1}}{\{B^{-1}b\}}, 
% \frac{B^{-1}}{1\text{-}\{B^{-1}b\}}\right)y,
% \end{align*}}%
% (see, e.g., \cite[Eq. (8)-(9)]{fischetti2007mixed}) whose UDDZ is 
% {\footnotesize\begin{align*}
% \bar{\phi}(y) &= \max\left(\frac{B^{-1}y}{\{B^{-1}b\}}, \frac{\text{-}B^{-1}y}{1\text{-}\{B^{-1}b\}}\right) + \max\left(\frac{\text{-}B^{-1}}{\{B^{-1}b\}}, \frac{B^{-1}}{1\text{-}\{B^{-1}b\}}\right)y.
% \end{align*}}%
{\small
\setlength{\abovedisplayskip}{5pt}
\setlength{\belowdisplayskip}{5pt}
\begin{align*}
% \phi(y) &= \min\left(\frac{\{B^{-1}y\}}{\{B^{-1}b\}}, \frac{1\text{-}\{B^{-1}y\}}{1\text{-}\{B^{-1}b\}}\right) + \max\left(\frac{\text{-}B^{-1}}{\{B^{-1}b\}}, \frac{B^{-1}}{1\text{-}\{B^{-1}b\}}\right)y,
\phi(y) &= \min\bigg(\{B^{-1}y\},\;\Delta\,\bigg(\frac{\{B^{-1}b\}}{1-\{B^{-1}b\}}\bigg)\,\big[1-\{B^{-1}y\}\big]\bigg) \\[-3pt]
&\hspace{50pt}+ \max\bigg(-B^{-1},\;\Delta\,\bigg(\frac{\{B^{-1}b\}}{1-\{B^{-1}b\}}\bigg)\,B^{-1}\bigg)\,y,
\end{align*}}%
(see, e.g., \cite[Eq. (8)-(9)]{fischetti2007mixed}) whose UDDZ is 
{\small
\setlength{\abovedisplayskip}{5pt}
\setlength{\belowdisplayskip}{7pt}
\begin{align*}
% \bar{\phi}(y) &= \max\left(\frac{B^{-1}y}{\{B^{-1}b\}}, \frac{\text{-}B^{-1}y}{1\text{-}\{B^{-1}b\}}\right) + \max\left(\frac{\text{-}B^{-1}}{\{B^{-1}b\}}, \frac{B^{-1}}{1\text{-}\{B^{-1}b\}}\right)y.
\bar{\phi}(y) &= \max\bigg(B^{-1}y,\;-\Delta\,\bigg(\frac{\{B^{-1}b\}}{1-\{B^{-1}b\}}\bigg)\,B^{-1}y\Big)\bigg) \\[-3pt]
&\hspace{50pt}+ \max\bigg(-B^{-1},\;\Delta\,\bigg(\frac{\{B^{-1}b\}}{1-\{B^{-1}b\}}\bigg)\,B^{-1}\bigg)\,y.
\end{align*}}%
The MILP \eqref{eq:milp} with the GMI inequalities added yields the enlarged problem
{
\setlength{\abovedisplayskip}{7pt}
\setlength{\belowdisplayskip}{7pt}
\begin{align}
\begin{array}{rl}
    \underset{(x, z)\in\R^n}{\min} &c^tx + h^tz \\
    \text{s.t. } & Ax + Gz\geq b, \\
                 & \phi(A)x + \bar{\phi}(G)z \geq \phi(b) \\
                 & x, z\geq 0, \;\; x\in\Z^k,
\end{array}
\label{eq:milp-gmi}
\end{align}}%
which is equivalent to the original one \eqref{eq:milp}, but has tighter LP relaxation, hence would hopefully yield a better dual bound.  Indeed, Gomory \cite{gom-60} showed that the inequalities necessarily cut off the optimal %solution of the LP relaxation \eqref{eq:lp}, 
LP solution i.e., they are cutting planes.

We note that if we define the function $f(y) = [y, \phi(y)]$, where we recall that $[\cdot, \cdot]$ denotes the concatenation of two vectors, then $\bar{f}(y)=[y,\bar{\phi}(y)]$, and we could have written the enlarged MILP \eqref{eq:milp-gmi} more suggestively as
{
\setlength{\abovedisplayskip}{7pt}
\setlength{\belowdisplayskip}{7pt}
\begin{align}
\begin{array}{rl}
    \underset{(x, z)\in\R^n}{\min} &c^tx + h^tz \\
    \text{s.t. } & f(A)x + \bar{f}(G)z \geq f(b), \\
                 & x, z\geq 0, \;\; x\in\Z^k.
\end{array}
\label{eq:milp-gmi2}
\end{align}}
This is a MILP in the same form as the original problem \eqref{eq:milp}, but with $A, G$ and $b$ replaced by $f(A), \bar{f}(G)$ and $f(b)$ respectively.

\section{Generalized GMI valid inequalities}
\label{sec:generalized-inequalities}

Our reasoning starts by noting that we could have generalized GMIs by instead considering a parametrized family of subadditive functions. Namely, we could have considered the functions
{
\allowdisplaybreaks
\setlength{\abovedisplayskip}{7pt}
\setlength{\belowdisplayskip}{7pt}
\begin{align}
\phi_{W, v}(y) &= \min\Big(\{Wy\},\;\Delta\Big(\frac{v}{1-v}\hspace{1pt}\Big)\,\big[1-\{Wy\}\big]\Big) 
\notag\\&\hspace{50pt}+ \max\Big(\text{-}W,\;\Delta\Big(\frac{v}{1-v}\hspace{1pt}\Big)\,W\Big)\,y
\label{eq:generalized-gmi}
\end{align}}%
parametrized by an arbitrary weight matrix $W\in\R^{m'\times m}$ and a vector $v\in{[0,1)}^{m'}$. One can recover the original function by setting $m'=m, W=B^{-1}$ and $v=\{B^{-1}b\}$. 

(We remark in passing that another family of generalized GMI inequalities has also been considered in the literature: a good summary is provided in \cite[Section 4.1]{cornuejols2008valid}. This family forms a subset of ours, namely, $\psi_W(y) = \phi_{W,\,\{Wb\}}(y)$, i.e., with $v = \{Wb\}$, and it is in fact known that, given any matrix weight $W$, that is the optimal choice for $v$ -- see, e.g., \cite{dash2010mir}, where the result is shown for the equivalent mixed-integer rounding inequalities. However, this family depends on $b$, which would have complicated our discussion as well as the resulting algorithm, so we prefer to remain in this larger family to provide a more natural derivation.)

Our generalized GMI family satisfies the following.

\begin{proposition} We have $\phi_{W, v}\in\mathcal{F}^{m,m'}$ for any $W\in\R^{m'\times m}, v\in[0,1)^{m'}$.
\end{proposition}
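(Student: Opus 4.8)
The plan is to verify the three defining properties of $\mathcal{F}^{m,m'}$ — centeredness, monotonicity, and subadditivity — directly from the formula \eqref{eq:generalized-gmi}, exploiting the fact that the structure of $\phi_{W,v}$ mirrors the GMI function $\phi$ exactly, with $B^{-1}$ replaced by $W$ and $B^{-1}b$ replaced by $v$; since the original $\phi$ is known to lie in $\mathcal{F}^{m,p}$, the same elementary arguments should go through verbatim. First I would treat $\phi_{W,v}$ as a sum of two terms, $\phi_{W,v}(y) = g(y) + Ly$ where $g(y) = \min\!\bigl(\{Wy\}/\{v\},\, (1\text{-}\{Wy\})/(1\text{-}\{v\})\bigr)$ and $L = \max\!\bigl(\text{-}W/\{v\},\, W/(1\text{-}\{v\})\bigr)$ is a constant matrix; the linear part $Ly$ trivially contributes centeredness, monotonicity in the weak sense is not automatic for $Ly$ alone so the argument must be made for the sum, but linearity gives subadditivity (with equality) for that piece, so the real work is understanding $g$.

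For centeredness, note $g(0) = \min(0, 0) = 0$ and $L\cdot 0 = 0$, so $\phi_{W,v}(0)=0$. For subadditivity, the key observation is the classical one behind GMI cuts: for any real $t$, the quantity $\min\!\bigl(\{t\}/\{v_i\},\, (1-\{t\})/(1-\{v_i\})\bigr)$ is, as a function of $t\in\R$, periodic with period $1$ and subadditive on the reals — this is exactly the statement that the Gomory function is subadditive, and it holds componentwise for each row $i$. Combined with the fact that $\{W(y_1+y_2)\}$ and $\{Wy_1\} + \{Wy_2\}$ differ by an integer vector componentwise, periodicity lets us replace $g(y_1+y_2)$ by the value at $\{Wy_1\}+\{Wy_2\}$, and then subadditivity of the one-dimensional Gomory function gives $g(y_1+y_2)\le g(y_1)+g(y_2)$; adding the identity $L(y_1+y_2) = Ly_1 + Ly_2$ yields subadditivity of $\phi_{W,v}$. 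For monotonicity, I would again reduce to the componentwise scalar statement: one checks that for the relevant representatives, increasing an argument can only increase the Gomory-type expression once combined with the $Ly$ term — more precisely, the standard fact is that the full expression $\phi_{W,v}(y)$, viewed as built from $Wy$, is nondecreasing in $y$ because the $\max$ in $L$ is precisely chosen to dominate the jumps of the $\{\cdot\}$ terms; this is the same calculation that shows the GMI function is monotone.

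The main obstacle, and the step deserving the most care, is the monotonicity verification: unlike centeredness and subadditivity, it is not a pure consequence of periodicity and the scalar Gomory identity but requires checking that the linear correction $Ly$ with $L = \max(\text{-}W/\{v\}, W/(1\text{-}\{v\}))$ exactly compensates the downward jumps of $\{Wy\}/\{v\}$ and the discontinuities of the min; the cleanest route is to reduce everything componentwise to $m'=1$ and a single scalar variable direction, cite the known monotonicity of the scalar GMI/Gomory function (as in \cite{fischetti2007mixed} or \cite{guzelsoy2007duality}), and observe that none of those one-dimensional arguments used any property of $B^{-1}$ or $B^{-1}b$ beyond $\{B^{-1}b\}_i\in(0,1)$, which here is simply the (generic, or harmlessly assumed) condition $\{v\}_i\in(0,1)$. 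I expect the whole proof to be short once this reduction is made explicit.
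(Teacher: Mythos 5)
Your overall strategy is sound and your decomposition $\phi_{W,v}(y)=g(y)+Ly$ is exactly the one the paper uses (there written as $h_{W,v}(y)+\bar h_{W,v}(-I)y$), and your subadditivity argument --- periodicity of the fractional-part composition plus subadditivity of the scalar Gomory function on one period, then composing with the linear map $W$ --- is essentially the paper's argument, which packages the same fact as a citation to a theorem on concave nonnegative functions composed with $\{\cdot\}$. Where you genuinely diverge is monotonicity, and you correctly identify it as the delicate step. Your route is a slope-compensation argument: reduce to one output row and one input coordinate direction, and check that the constant slope $L_{ij}=\max(-W_{ij}/\{v_i\},\,W_{ij}/(1-\{v_i\}))$ dominates the most negative slope $\min(W_{ij}/\{v_i\},\,-W_{ij}/(1-\{v_i\}))=-L_{ij}$ of the periodic piece; this works (and your worry about discontinuities is moot, since both branches of the min vanish at the endpoints of the period, so the composite is continuous), but you leave it at the level of ``cite the known scalar monotonicity,'' so the step is sketched rather than proved. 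The paper instead gives a short algebraic argument that avoids any case analysis on slopes: for $y_1\le y_2$ it writes $\phi(y_1)=\phi(y_2+(y_1-y_2))$, applies subadditivity, upper-bounds $h(y_1-y_2)$ by its UDDZ $\bar h(y_1-y_2)$, and uses positive homogeneity of $\bar h$ together with $y_2-y_1\ge0$ to make the two correction terms cancel exactly. The paper's version is cleaner and self-contained; yours is more concrete and makes visible \emph{why} the linear correction matrix is defined with that particular $\max$, but to be complete you would need to actually carry out the scalar slope computation rather than defer it to a reference.
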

\begin{proof}
The function $g_v(y) = \min\big(y, \Delta(\frac{v}{1-v})[1-y]\big)$ is concave on $y\in[0, 1)^{m'}$ and non-negative. Hence, by \cite[Theorem 5.4]{matkowski2011subadditive}, the function $g_v(\{y\})$ is subadditive on $\R^{m'}$. Then, 
$g_v(\{W(y_1+y_2)\})=g_v(\{Wy_1+W_2y_2\})\leq g_v(\{Wy_1\})+g_v(\{Wy_2\})$ for any $y_1, y_2$, so $h_{W, v}=g_v(\{y\})$ is subadditive on $\R^m$. The UDDZ of $h_{W, v}$ is $\bar{h}_{W, v}=\max\big(Wy, -\Delta(\frac{v}{1-v})Wy\big)$, so that we can write
$$\phi_{W, v}(y) = h_{W, v}(y)+\bar{h}_{W, v}(-I)y.$$
Since $h_{W, v}$ is subadditive, and $y\mapsto\bar{h}_{W, v}(-I)y$ is linear, $\phi_{W, v}$ must be subadditive too.

Say $y_1\leq y_2$. Then, 
\begin{align*}
&\phi_{W, v}(y_1) = \phi_{W, v}(y_2+(y_1-y_2)) \\
&\quad\leq \phi_{W, v}(y_2) + h_{W, v}(y_1-y_2) +\bar{h}_{W, v}(-I)(y_1-y_2) \\
&\quad\leq \phi_{W, v}(y_2) + \bar{h}_{W, v}(y_1-y_2) +\bar{h}_{W, v}(-I)(y_1-y_2) \\
&\quad= \phi_{W, v}(y_2) + \bar{h}_{W, v}(-I)(y_2-y_1) +\bar{h}_{W, v}(-I)(y_1-y_2) \\
&\quad= \phi_{W, v}(y_2)
\end{align*}
because $y_2-y_1\geq 0$. Hence, $\phi_{W, v}$ must be non-decreasing. Finally, $\phi_{W, v}(0)=0$, so $\phi_{W, v}\in\mathcal{F}^{m,m'}$.
\end{proof}

In light of this proposition, functions $\phi_{W, v}(y)$ induce valid inequalities, and defining  $f_{W,v}(y) = [y, \phi_{W, v}(y)]$ analogously, we could have considered the family of MILPs
\begin{align}
\begin{array}{rl}
    \underset{(x, z)\in\R^n}{\min} &c^tx + h^tz \\
    \text{s.t. } & f_{W, v}(A)x + \bar{f}_{W, v}(G)z \geq f_{W, v}(b), \\
                 & x, z\geq 0, \;\; x\in\Z^k.
\end{array}
\label{eq:milp-enlarged}
\end{align}
parametrized by $W, v$.

Now, problem \eqref{eq:milp-enlarged} is itself a MILP. Thus, we could apply the reasoning we just outlined to itself: if we had added $m_1$ valid inequalities with weights $W_1\in\R^{m_1\times m}, v_1\in[0,1)^{m_1}$, then we could add $m_2$ new inequalities with weights $W_2\in\R^{m_2\times (m+m_1)}, v_2\in[0,1)^{m_2}$. Doing so would result in the even larger problem
\begin{align*}
\begin{array}{rl}
    \underset{(x, z)\in\R^n}{\min} &c^tx + h^tz \\
    \text{s.t. } & f_{W_2, v_2}\circ f_{W_1, v_1}(A)x \\
                 & \quad+ \bar{f}_{W_2, v_2}\circ \bar{f}_{W_1, v_1}(G)z \\
                & \hspace{80pt}\geq f_{W_2, v_2}\circ f_{W_1, v_1}(b), \\
                 & x, z\geq 0, \;\; x\in\Z^k.
\end{array}
\end{align*}
In fact, we can repeat this as many times we like, say for $K$ rounds. Then, we would end up with the MILP
\begin{align*}
\begin{array}{rl}
    \underset{(x, z)\in\R^n}{\min} &c^tx + h^tz \\
    \text{s.t. } & f_{W_K, v_K}\circ \dots \circ f_{W_1, v_1}(A)x \\
    & \quad+ \bar{f}_{W_K, v_K}\circ \dots \circ \bar{f}_{W_1, v_1}(G)z \\
    & \hspace{40pt}\geq f_{W_K, v_K}\circ \dots \circ f_{W_1, v_1}(b), \\
                 & x, z\geq 0, \;\; x\in\Z^k.
\end{array}
\end{align*}
This can be written even more suggestively by defining $\theta = (W_1, v_1, \dots, W_K, v_K)$ and
\begin{align*}
f_{\theta}(y) = f_{W_K, v_K}\circ \dots \circ f_{W_1, v_1}(y),
\end{align*}
so that the MILP resulting from adding $K$ rounds of our generalized GMI valid inequalities with weights $\theta$ would be
\begin{align}
\begin{array}{rl}
    \underset{(x, z)\in\R^n}{\min} &c^tx + h^tz \\
    \text{s.t. } & f_\theta(A)x + \bar{f}_\theta(G)z \geq f_\theta(b), \\
                 & x, z\geq 0, \;\; x\in\Z^k.
\end{array}
\label{eq:milp-family}
\end{align}
Moreover, this generalizes the classical GMI cuts, in the sense that the MILP resulting from adding $K$ rounds of GMI cuts corresponds to a specific sequence $m_1=m$, $W_1=B_1^{-1}$, $v_1=\{B_1^{-1}b_1\}$ of dimensions and weights $m_K = 2^{K-1}m$, $W_K = B_K^{-1}$, $v_K=\{B_K^{-1}b_K\}$, where $B_k$ and $b_k$ are the basis matrix and the right-hand side vector of the enlarged LP after $k$ rounds, for $1\leq k\leq K$. In other words, one could retrospectively regard the classical GMI separation algorithm as a way of computing a good parameter vector $\theta_\text{GMI}=(W_1, v_1, \dots, W_K, v_K)$ for problem \eqref{eq:milp-family}, when $m_1=m, \dots, m_K=2^{K-1}m$.

\section{Continuous cutting plane optimization}
\label{sec:ccp-opt}

One main reason why the weights $\theta_\text{GMI}$ of the classical GMI algorithm are desirable, besides their low computational cost, is that the lower bound obtained by solving the LP relaxation of problem \eqref{eq:milp-family} with dimensions $(m_1, \dots, m_K)=(m, \dots, 2^{K-1}m)$, that is
\begin{align}
\text{LP}(f_\theta) = \begin{array}{rl}
    \underset{(x, z)\in\R^n}{\min} &c^tx + h^tz \\
    \text{s.t. } & f_\theta(A)x + \bar{f}_\theta(G)z \geq f_\theta(b), \\
                 & x, z\geq 0,
\end{array}
\label{eq:lp-family}
\end{align}
with $\theta=\theta_\text{GMI}$, tends to be high (see, e.g., \cite{lodi2010mixed}). A natural question is whether those weights are optimal: that is, does $\theta=\theta_\text{GMI}$ maximize the dual bound $\text{LP}(f_\theta)$ with $(m_1,\dots,m_K)=(m, \dots, 2^{K-1}m)$? We will see empirically this is not the case, even for $K=1$.

Hence, we could instead consider the following. Let us agree ahead of time that we will have $K$ rounds of generalized GMI inequalities, with $m_1, \dots, m_K$ (not necessarily $m, \dots, 2^{K-1}m$) inequalities in each round.

\begin{definition} We call \emph{continuous cutting plane optimization} the 
unconstrained nonlinear problem
\begin{align}
\max_\theta LP(f_\theta).
\label{eq:continuous-cuts-optimization}
\end{align}
\end{definition}

That is, for a number of rounds $K$ and dimensions $m_1, \dots, m_K$ fixed \emph{a priori}, we try to find the weights that make $f_\theta$ yield the highest dual bound possible when used as a cut-generating function.

\section{Deep subadditive primal optimization}
\label{sec:subadditive-opt}

The structure of the function $f_\theta$ we are optimizing in problem \eqref{eq:lp-family} is worth a closer look. This function has a very specific structure, which reminds one of deep neural networks in machine learning. Indeed, in the language of the field, we could describe the function $f_\theta$ as a neural network with $K$ layers $f_{W_k, v_k}(y)=[y, \phi_{W_k, v_k}(y)]$, where each layer $f_{W_k, v_k} \in\mathcal{F}^{p_{k-1}, p_k}$ for any $W_k, v_k$, and where $p_k\!=\!m\!+\!m_1\!+\!\dots\!+\!m_k$. But if $f, g$ are two non-decreasing and subadditive functions, then so is $f\circ g$,  and if $f(0)=0$ and $g(0)=0$, then $f\circ g(0)=0$. Thus, $f_\theta$ must itself be in $\mathcal{F}^{m,p}$ for any $\theta$, where $p=p_K=m+m_1+\dots+m_K$ is the output dimension of $f_\theta$, and we can describe $f_\theta$ as a neural network, with a specific architecture that makes it subadditive (more precisely, in $\mathcal{F}^{m,p}$) by construction.

This is particularly enlightening when looking at the following connection. It is known \cite{jeroslow1978cutting, jeroslow1979minimal, wolsey1981integer} that solving the MILP \eqref{eq:milp} is equivalent to solving the \textit{subadditive dual} problem
\begin{flalign}
&\hspace{30pt}
\begin{array}{rl}
    \underset{g\in\mathcal{F}^{m,1}}{\max} &g(b) \\
    \text{s.t. } & g(A) \leq c, \\
                 & \bar{g}(G) \leq h,
\end{array} &
\label{eq:subadditive-dual}
\end{flalign}%
in the sense that the MILP \eqref{eq:milp} and the subadditive dual \eqref{eq:subadditive-dual} are feasible whenever the other is, and when they do they achieve the same optimal value. Let $g^*$ be an optimal solution to this latter problem. By writing $g^*={w^*}^tf^*$ for $p$-vectors $w^*=[1/p,\dots,1/p]$ and $f^*=[g^*,\dots,g^*]$, we can see that \eqref{eq:subadditive-dual} is equivalent to
\begin{flalign*}
\hspace{40pt}
&= \begin{array}{cl}
    \underset{f\in\mathcal{F}^{m,p},\,w\in\R^p}{\max} &w^tf(b) \\
    \text{s.t. } & w^tf(A) \leq c, \\
                 & w^t\bar{f}(G) \leq h, \\
                 & w\geq 0,
\end{array} &
\end{flalign*}
which then in turn equals by LP duality
\begin{flalign}
\hspace{30pt}
&= \underset{f\in\mathcal{F}^{m,p}}{\max}\left[\!\!\begin{array}{rl}
    \underset{(x,\,z)\in\R^n}{\min} &c^tx + h^tz \\
    \text{s.t. } & f(A)x + \bar{f}(G)z \geq f(b) \\
    & x, z\geq 0
\end{array}\right] & \notag\\
&= \underset{f\in\mathcal{F}^{m,p}}{\max} 
\;\text{LP}(f).
\label{eq:subadditive-primal}
\end{flalign}
By analogy with \eqref{eq:subadditive-dual}, we could call \eqref{eq:subadditive-primal} the ``subadditive primal'' problem of the MILP \eqref{eq:milp}, to which it is equivalent. Then, the continuous cuts optimization problem \eqref{eq:continuous-cuts-optimization} can be regarded as a restriction of this problem to the family $f=f_\theta\in\mathcal{F}^{m,p}$.

% \begin{proposition} We have
% \begin{align}
% \underset{f\in\mathcal{F}^{m,p}}{\max}\;\text{LP}(f)
% = \text{MILP}.
% \label{eq:subadditive-primal}
% \end{align}
% \end{proposition}
% \begin{proof}
% Let $(x, z)$ be MILP-feasible, that is, $Ax+Gz\geq b, (x,z)\geq0$ and $x\in\Z^k$. Then, $f(A)x+\bar{f}(G)z\geq f(b)$ for any $f\in\mathcal{F}^{m,p}$, so
% $\underset{f\in\mathcal{F}^{m,p}}{\max}\;\text{LP}(f) \leq \text{MILP}$.

% For the other direction, consider the ``value function''
% \begin{align}
% v(y) = \begin{array}{rl}
%     \underset{(x, z)\in\R^n}{\min} &c^tx + h^tz \\
%     \text{s.t. } & Ax + Gz\geq y,\\
%     &x, z\geq 0,\;\; x\in\Z^k.
% \end{array}
% \label{eq:value-function}
% \end{align}
% This function is subadditive, non-decreasing, and since we assumed the MILP has an optimal solution, $v(0)=0$ by \cite[Lemma 3]{guzelsoy2007duality}. Hence, $v\in\mathcal{F}^{m,1}$. Moreover, by setting $x$ or $y$ in Eq. \eqref{eq:value-function} to vectors that are zero everywhere, except for one entry equal to one (one-hot vectors), one obtains that $v(A)\leq c, \bar{v}(G)\leq h$. Taking $f^*(y)=[v(y), \dots, v(y)]$, one obtains that
% $\text{LP}(f^*) \geq \text{MILP}$, and since $f^*\in\mathcal{F}^{m,p}$, this shows the result.
% \end{proof}
% Thus, solving the MILP \eqref{eq:milp} is equivalent to solving an optimization problem over the class of subadditive functions, which we can call the ``subadditive primal problem'' associated with the MILP \eqref{eq:milp}. 

With this in mind, this means that the classical GMI algorithm can be interpreted as approximately solving a MILP, \emph{by training a subadditive neural network to solve the subadditive primal problem \eqref{eq:subadditive-primal} in  a greedy, layer-by-layer fashion}. In this neural network, each layer $f_{W_k, v_k}$ corresponds to a round of valid inequalities, and have the specific form
{\small\begin{align*}
% f_{W_k, v_k}(y) &= \Bigg[y, \min\bigg(\frac{\{W_ky\}}{\{v_k\}}, \frac{1\text{-}\{W_ky\}}{1\text{-}\{v_k\}}\bigg)\!+\!\max\bigg(\frac{\text{-}W_k}{\{v_k\}}, \frac{W_k}{1\text{-}\{v_k\}}\bigg)\,y\Bigg]
f_{W_k, v_k}(y) &= \bigg[\,y,\;\;\min\Big(W_ky,\;\Delta\bigg(\frac{v_k}{1-v_k}\bigg)\,[1-\{W_ky\}]\Big)
\\&\hspace{50pt}+\max\Big(\text{-}W_k,\;\Delta\bigg(\frac{v_k}{1-v_k}\bigg)\,W_k\Big)\,y\hspace{1pt}\bigg],
\end{align*}}%
for learnable parameters $W_k, v_k$. In contrast, our continuous cuts optimization problem \eqref{eq:continuous-cuts-optimization} can be interpreted as \emph{training the same neural network, but in an end-to-end fashion} (all layers at the same time). In other words, we maintain the same generalized GMI function \eqref{eq:generalized-gmi} in our layers, but we continuously update the weights in the attempt to find better cuts.

Interestingly, this suggests that there is no strong reason why we should restrict ourselves to GMI cuts: the approach could very well be applied to any other continuously parametrized family of cuts that are induced by a subadditive function, such as Chv\'{a}tal-Gomory cuts, or even others that do not correspond to any known family. We will discuss further this possibility in the conclusion.

\section{Concrete algorithm}
\label{sec:algorithm}

Our continuous cutting plane optimization problem \eqref{eq:continuous-cuts-optimization}, although unconstrained, is highly nonlinear and standard optimization approaches quickly run into difficulties. In particular, a challenge is that most $\theta$ parameters lead to valid inequalities that do not cut off the (optimal) vertices of the LP polytope, hence lead to zero gradients.

Hence, we propose instead a two-step heuristic to solve \eqref{eq:continuous-cuts-optimization}, detailed in Algorithm \ref{alg:proposed}. The idea is to alternate between finding the optimal solution of the linear relaxation with the valid inequalities added, and changing the parameters by gradient descent so as to cut off this optimal solution. This algorithm seems to perform well in practice, and intuitively should lead to increase the dual bound $\textnormal{LP}(f_\theta)$ as more and more LP solutions are cut off. On the other hand, it is also quite computationally expensive, although warm-starting the linear program from the previous optimal basis alleviates some of that cost.

In practice, we found it useful to add some noise to the point we aim to cut off: that is, instead of trying to cut the LP solution $x^*_\text{LP}$, we try to cut a noisy version $x^*_\text{LP} + \beta\epsilon$, where $\epsilon$ is a small amount of Gaussian noise and $\beta$ is a hyperparameter (set to $10^{-4}$ in all experiments). This helped in problems where degeneracy occurred in the linear relaxations.

The parametrization of our weights $v$ require that they be contained in $[0,1)$. This could be enforced, say, with projected gradient descent, but in practice we found it easier to enforce it to be in $(0,1)$ by writing it as $v=\text{sigmoid}(v')=1/(1+\exp(-v'))$ for some other parameter $v'\in\R^m$, over which we optimize.

\begin{algorithm}[t]
\caption{Two-step heuristic}
\label{alg:proposed}
\Hyperparameters{$\alpha>0$, $\beta\geq0$}
\Repeat{$\textnormal{LP}(f_\theta)$ has converged}{
    Solve $\text{LP}(f_\theta)$ and obtain its solution $x^*_\text{LP}$
    \Repeat{$\big[f_\theta(b)-f_\theta(A)x^*_\text{LP}\big]_i < 0$ for some $i$}{
        \begin{itemize}[leftmargin=*, label=-]
        \item Compute a noisy target \\[2pt]
        \hspace{30pt}$\bar{x}_\text{LP} = x_\text{LP}^*+\beta\epsilon$ 
        \\[2pt]
        for some noise $\epsilon\sim\text{N}(0,I)$
        \vspace{-10pt}
        \item Take a gradient step 
        \vspace{-10pt}\begin{align*}\hspace{-50pt}
        \theta \;\;\text{-=}\;\; \alpha\frac1{p}\sum_{i=1}^p\nabla_\theta\Big[f_\theta(b)-f_\theta(A)\bar{x}_\text{LP}\Big]_i
        \end{align*}\vspace{-20pt}
        \end{itemize}
    }
}
\end{algorithm}

We propose two ways to initialize the weights of our inequalities. 
\begin{enumerate}
    \item We can warm-start the optimization by initializing the weights with the values obtained by running the classical GMI separation algorithm for $K$ rounds. If $m_k < 2^{k-1}m$, we can pick, for example, the weights corresponding to the $m_k$ most violated cuts.
    \item We can initialize from scratch with random weights. We attempted a few different possibilities, and our most successful approach was to initialize the rows of the $W$ matrices with $m_k$ mutually orthogonal random unit vectors, and the $v'=\text{logit}(v)=\log(v/[1-v])$ vectors from a $\text{N}(0, I_{m_k})$ distribution.
\end{enumerate}

Finally, the gradient step warrants further detail. Although in principle one could derive, through matrix calculus, closed form expressions for the gradient, we found it easier in practice to compute $f_\theta(A), f_\theta(b)$ and their gradients using an automatic differentiation library (such as Pytorch \cite{paszke2019pytorch}).

\begin{figure*}[t]
    \centering
    \begin{subfigure}{.24\textwidth}
      \includegraphics[width=\linewidth]{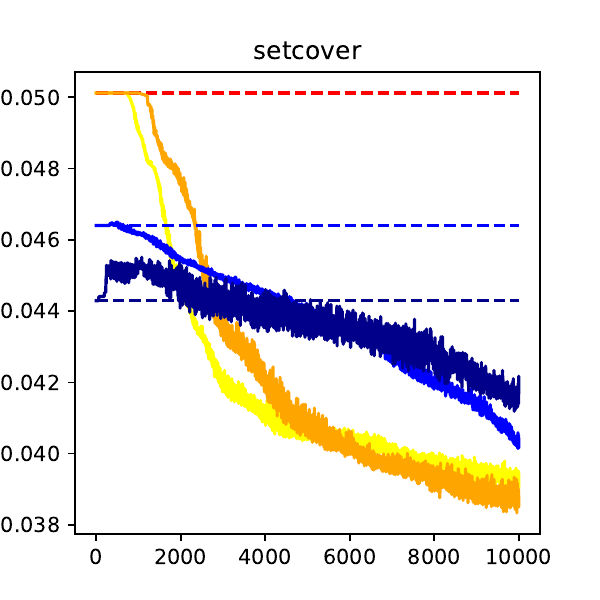}
    \end{subfigure}%
    \begin{subfigure}{.24\textwidth}
      \includegraphics[width=\linewidth]{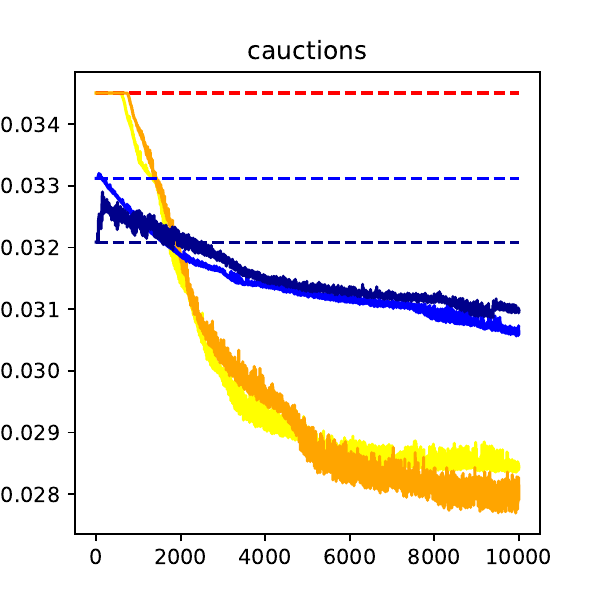}
    \end{subfigure}
    \begin{subfigure}{.24\textwidth}
      \includegraphics[width=\linewidth]{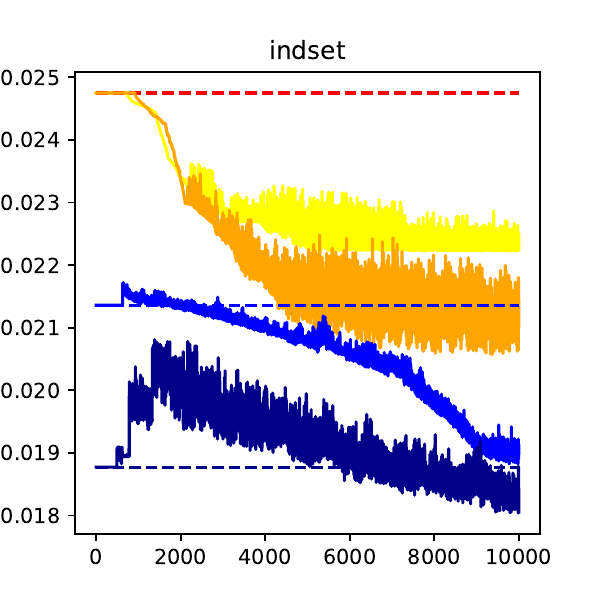}
    \end{subfigure}%
    \begin{subfigure}{.24\textwidth}
      \includegraphics[width=\linewidth]{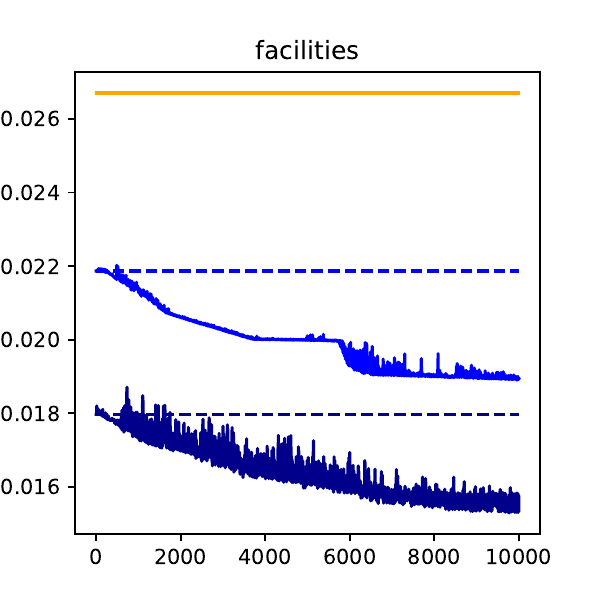}
    \end{subfigure}
        \caption{Median optimality gap over the different instances as a function of the number of gradient steps taken by Algorithm \ref{alg:proposed}. The yellow and orange lines represent the optimality gap with a random orthogonal initialization, with one and two layers, respectively. The blue and navy lines represent the optimality gap when warm-starting from the classical GMI weights, again with one and two layers respectively. The dashed lines represent the initial values in each setup, with the red dashed line being also the optimality gap of the LP relaxation.}
        \label{fig:results-1}
\end{figure*}

Now, running the classical GMI separation algorithm with an increasing number of rounds is also a way of increasing the dual bound over time. Therefore, the classical algorithm can really be seen in two ways: as a competitor for weight selection when $K$ is fixed a priori, and as a more general competitor for obtaining dual bounds when $K\rightarrow\infty$.

In the latter case, a few important differences emerge. Besides the fact we keep the number of cuts fixed (see \eqref{eq:continuous-cuts-optimization}), another difference is that improvements are not monotone anymore. That is, there is no guarantee with our algorithm that the dual bound necessarily increases as we take more gradient steps, because when varying the GMI weights so as to cut the latest LP solution, previous points that were cut might not be cut anymore. 
In particular, it is possible that a previous LP solution gets ``dislodged'' and becomes feasible again for our generalized GMI inequalities after a gradient step, worsening the dual bound. Thus, our algorithm sacrifices monotonicity, although in practice this does not seem too high a price to pay for potentially improving the dual bound.

A more abstract way we can look at the situation is that in the classical GMI separation algorithm, the memory of the previous cuts is preserved in a trivial fashion by keeping the cuts in the LP, whereas in our algorithm, the memory of the previous cuts can be thought as preserved in the slowly evolving weights. 

Note that the collection of cuts that one is forced to keep in a classical cutting plane approach depends on the sequence of fractional solutions that the algorithm needs to separate. However, we claim that such a sequence is somehow random, i.e., having very little guarantee of good convergence properties. In contrast, Algorithm \ref{alg:proposed} -- driven by the quality of the bound and giving up on monotonicity -- is able to distill a small cardinality set of cutting planes that are practically ``robust" for a cloud of fractional solutions.

\begin{figure*}[t]
    \centering
    \begin{subfigure}{.49\textwidth}
      \includegraphics[width=\linewidth]{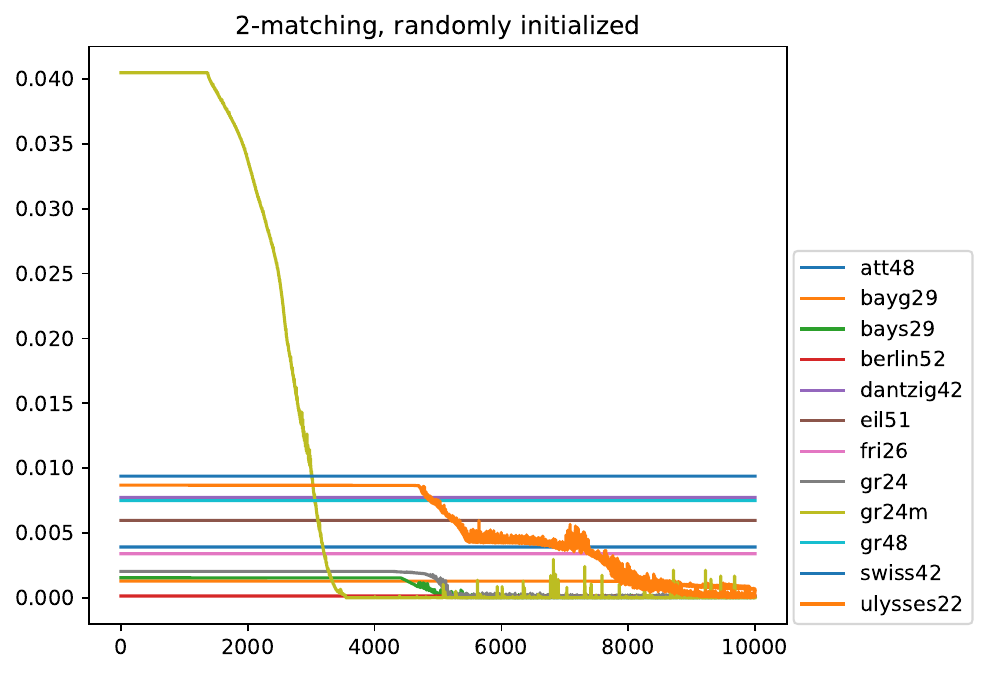}
    \end{subfigure}%
    \begin{subfigure}{.49\textwidth}
      \includegraphics[width=\linewidth]{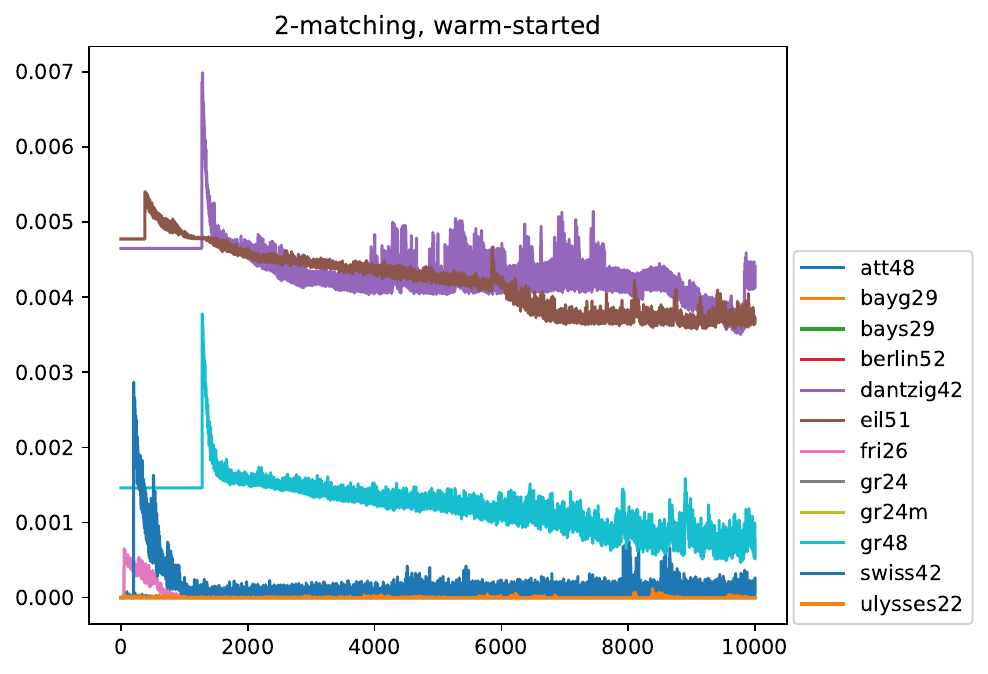}
    \end{subfigure}
    \\
    \begin{subfigure}{.49\textwidth}
      \includegraphics[width=\linewidth]{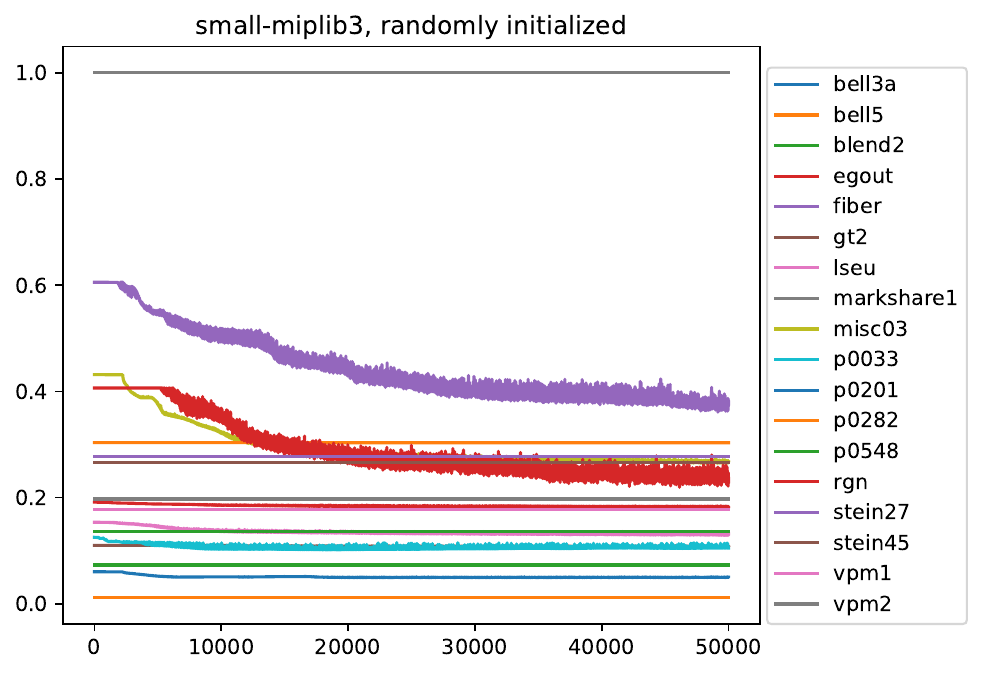}
    \end{subfigure}%
    \begin{subfigure}{.49\textwidth}
      \includegraphics[width=\linewidth]{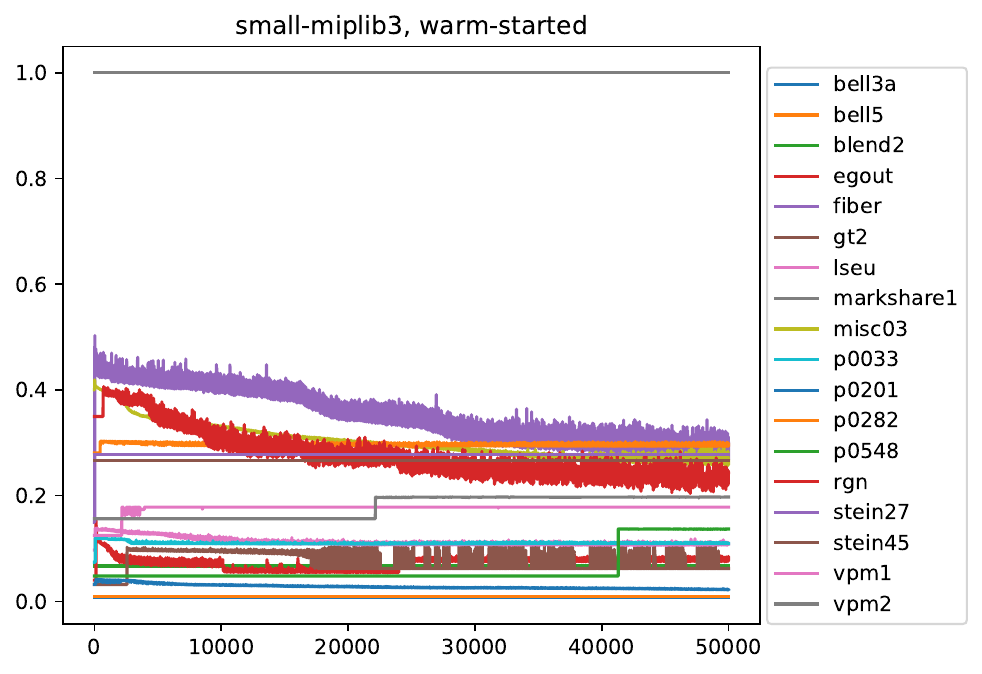}
    \end{subfigure}
        \caption{Optimality gap as a function of the number of gradient steps taken by Algorithm \ref{alg:proposed}, for each instance.}
        \label{fig:results-2}
\end{figure*}

\section{Experimental results}
\label{sec:experiments}
% \dc{Things to add: discussion about computational time? Bigger figure 1 plots? Discussion about flat line at start of warm-start curves?}

We now present two experiments that illustrate the performance of our approach. In each case, we implemented our algorithm with the Pytorch 1.9.0 automatic differentiation framework \cite{paszke2019pytorch}, while we used Gurobi 9.1.1 \cite{optimization2018gurobi} as LP solver, as well as for computing optimal MILP values for comparisons\footnote{\url{https://github.com/dchetelat/subadditive/commit/724e19f4fed8986f6055f3044dedfc91ea9c6cd6}.}.  The experiments were run on Intel Xeon W-2145 3.70GHz CPU cores and a NVIDIA Titan RTX GPU.

\subsection{First experiment}\label{sec:first-experiment}

For our first experiment, we evaluate on 4 benchmarks: 100 randomly generated minimum set covering, combinatorial auctions, maximum independent set and capacitated facility location instances, that were generated using the Ecole python library \cite{prouvost2020ecole} (following implementations from \cite{balas1980set,leyton2000towards,bergman2016decision,cornuejols1991comparison}, respectively.)

We report the results of optimizing a one-layer network with 32 inequalities, and a two-layer network with 32 inequalities each, illustrating the effect that deeper layers can have on performance. In addition, we show the results of starting from the classical GMI weights as well as from our random orthogonal initializer, as described in Section \ref{sec:algorithm}. Results over 10k gradient steps are reported in Figure \ref{fig:results-1}. At any gradient step, we report the median optimality gap over all instances of the same family, where optimality gap (in our formulation) is defined as $(z^*-z)/|z^*|$ for a dual bound $z$ and the MILP optimal value $z^*$. We report the computational cost of the experiments in \ref{appendix}.

As can be seen, in general our algorithm is able to learn and improve upon the classical GMI weights. This demonstrates that although Problem \eqref{eq:continuous-cuts-optimization} is highly nonconvex, it \emph{is} in fact possible to solve it to a degree where substantially better weights than the classical GMI weights can be found (albeit perhaps at great computational cost.) Moreover, using higher rank inequalities generally leads to better dual bounds, as expected.

We highlight however some less intuitive findings. First, the classical GMI weights seem to be fragile initial points, as the early gradient steps end up usually \emph{worsening} the dual bound, until the algorithm can find a better position from which actual improvements eventually occur. This is particularly dramatic with the maximum independent set instances, where for the two-layer network it took on average 6k gradient steps until an improvement over the classical weights could be found. Incidentally, this illustrates rather dramatically our point in Section \ref{sec:algorithm} that our approach does not guarantee monotone bound improvements.

Second, our random orthogonal initializer starts with the LP relaxation value, since the initial weights do not lead to inequalities that cut off any part of the LP polytope. However, interestingly, for at least two families (set covering and combinatorial auction instances), the performance eventually overtakes the curves initialized with the classical GMI weights. Thus, in at least some families, it appears that these initial inequalities might be better located for long-run optimization. At the other extreme, with capacitated facility location instances the algorithm was never able to find effective cuts when starting from the randomly initialized weights. Thus, from this experiment we cannot conclude that one initialization method is always better than the other.

\subsection{Second experiment}\label{sec:second-experiment}

We now turn to our second experiment. In this scenario, we evaluated on 12 2-matching \cite{fischetti2007optimizing} instances derived from TSPLIB \cite{reinelt1991tsplib} and 18 small instances taken from MIPLIB3 \cite{bixby1998updated}. These instances are more heterogeneous than in the first experiment, and the 2-matching problem are particularly interesting since rank-1 Chv\'atal-Gomory cuts are enough to compute their convex hull \cite{edmonds1965maximum}. Since GMI inequalities dominate Chv\'{a}tal-Gomory inequalities, this suggests that, in theory, a carefully optimized 1-layer network should be enough to solve the problems.

The instances in this experiment are harder than those in the first experiment, and we consequently use a larger setup, namely a two-layer network with 1024 inequalities in each layer. Again, we run the experiment with random orthogonal initialization, as well as initialization from the classical GMI weights. We report in Figure \ref{fig:results-2} the optimality gap achieved over each instance when running the algorithm for 10k and 50k gradient steps, on the 2-matching and MIPLIB3 instances, respectively. \ref{appendix} summarizes the computational cost of this experiment.

As can be seen, when looking at individual instances the portrait is somewhat complex. When warm-starting with the classical GMI weights, the algorithm is often able to improve on the initial value, but the curves are not always smooth, and there might be periods of stalling. With random orthogonal initialization, the algorithm often did not manage to learn an improvement within the number of gradient steps provided, although for some it did. Thus, we can conclude that although the method appears in general feasible, it offers no guarantees.

\section{Conclusions}
\label{sec:conclusion}

In this paper, we proposed an alternative way of finding cutting planes for mixed-integer linear programs, where parameters are optimized so that a subadditive neural network acts as a good cut-generating function. Besides the theoretical interest of this point of view, and its intriguing potential connection with machine learning, we also show that the problem can be solved in practice, albeit at high computational cost.

As mentioned in Section \ref{sec:ccp-opt}, we chose in this paper to illustrate our ideas with the well-known Gomory mixed-integer cuts. In practice, however, we could really consider any family of cuts induced by subadditive, non-decreasing, centered functions, such as Chv\'{a}tal-Gomory cuts, or some that might not have been previously considered in the literature. For example, we experimented with the family $\zeta_{W, v}(y) = \log(1+\phi_{W, v}(y))$. We found that on the 2-matching and small MIPLIB3 instances, these ``logarithmic'' GMI cuts often yielded superior results, which we believe could be due to yielding a loss landscape that is easier to navigate for gradient descent. Thus, there is a wide room for experimentation, and it is possible that the best families for a ``continuous'' approach are quite different than those that were best for classical approaches based on sequential LP solving.

Moreover, although this work does not represent machine learning \emph{per se} -- with machine learning being defined as the study of statistical generalization -- it is clear that the connection with deep neural networks suggests many potential ideas that could be applied to this line of work. To name a simple one, one could imagine adding a $L_1$ regularizer term in problem \eqref{eq:continuous-cuts-optimization} as to encourage the weights, hence the cuts, to be sparse. We do not doubt that many other interesting ideas could be found.

\section*{Acknowledgments}
The authors are indebted to Oktay G\"unl\"uk and many members of the Canada Excellence Research Chair in ``Data Science for Real-time Decision-making" at Polytechnique Montr\'eal where most of this work has been conducted while the second author was the chair holder. We are indebted to an anonymous referee for their very careful reading and useful suggestions. We are also grateful to the feedback and discussion during the IPAM ``Artificial Intelligence and Discrete Optimization" Workshop (February 27 - March 3, 2023).

\begin{table*}[t]
    \centering
    \resizebox{\textwidth}{!}{
    \begin{tabular}{l
                    r@{\:\( \pm \)\,}
                    r
                    r@{\:\( \pm \)\,}
                    r
                    r@{\:\( \pm \)\,}
                    r
                    r@{\:\( \pm \)\,}
                    r
                    r@{\:\( \pm \)\,}
                    r
                    r@{\:\( \pm \)\,}
                    r
                    }
         & \multicolumn{2}{c}{\textsc{setcover}}
         & \multicolumn{2}{c}{\textsc{cauctions}}
         & \multicolumn{2}{c}{\textsc{indset}}
         & \multicolumn{2}{c}{\textsc{facilities}}
         & \multicolumn{2}{c}{\textsc{2-matching}}
         & \multicolumn{2}{c}{\textsc{small-miplib3}}
        \\
        \hline
        Randomly initialized, 1 layer
        &  29.54 &   190.11
        &  22.14 &   146.06
        &  21.71 &   143.24
        &  18.39 &   102.77
        &  49.87 &    27.01
        & 354.10 &   370.82
        \rule{0pt}{8pt}
        \\
        Randomly initialized, 2 layers
        &  34.30 &   190.16
        &  26.10 &   146.78
        &  24.29 &   142.83
        &  16.92 &    87.38
        & 133.06 &   245.66
        & 534.88 &   440.83
        \\
        Warm-started, 1 layer
        &  20.36 &    50.89
        &  29.60 &   164.28
        &  11.87 &    30.18
        &  13.67 &    75.99
        & 145.97 &   240.40
        & 431.59 &   338.40
        \\
        Warm-started, 2 layers
        &  25.64 &    70.78
        &  13.13 &    13.63
        &  20.90 &    61.08
        &   5.14 &     8.23
        & 115.87 &   136.40
        & 519.75 &   357.89
        \\\hline
        Gurobi
        & 0.0040 &   0.0044
        & 0.0089 &   0.0060
        & 0.0008 &   0.0007
        & 0.0002 &   0.0001
        & 0.0001 &   0.0001
        & 0.1260 &   0.5541
        \rule{0pt}{8pt}
        \\\hline
    \end{tabular}}
    \caption{Average time in minutes ($\pm$ standard deviation) required to achieve the desired number of gradient steps in Experiments \ref{sec:first-experiment} and \ref{sec:second-experiment}. We also include the average time taken for Gurobi to solve the problems to optimality.}
    \label{tab:timings}
\end{table*}

\bibliographystyle{elsarticle-num} 
\bibliography{arxiv.bib}

\appendix

\section{Computational cost}\label{appendix}

For reference, we report in Table \ref{tab:timings} the computational cost of the experiments, as well as the times that Gurobi takes to solve these problems to optimality. As can be seen, although our method is able to find good dual bounds, the computational cost is prohibitive compared to simply feeding the instances to a standard combinatorial optimization solver. Admittedly, we have not explored any path for reducing such computational effort, and we clearly consider this an important future step.

\end{document}